\renewcommand{\Box}{\framebox{\rule{0.3em}{0.0em}}}
\newtheorem{thm}{Theorem}[section]
\newtheorem{lema}{Lemma}[section]
\newcommand{\bgeqn}{\begin{eqnarray}}
\newcommand{\edeqn}{\end{eqnarray}}
\newcommand{\bgeq}{\begin{eqnarray*}}
\newcommand{\edeq}{\end{eqnarray*}}
\newcommand{\bec}{\begin{center}}
\newcommand{\enc}{\end{center}}
\renewcommand{\Box}{\hfill \rule{2.3mm}{2.3mm}}
\newenvironment{proof}{\noindent{\bf Proof. }}{\hfill $\Box$\medskip}
\title{The log-convexity of the poly-Cauchy numbers}
\author{Takao Komatsu
\thanks{
The first author was supported in part by the grant of Wuhan University and by the grant of Hubei Provincial Experts Program.
}
\\
\small School of Mathematics and Statistics\\[-0.8ex]
\small Wuhan University, Wuhan, 430072, China\\[-0.8ex]
\small \texttt{komatsu@whu.edu.cn}\\\\
Feng-Zhen Zhao
\thanks{
The second author was supported by a grant of ``The First-class Discipline of Universities in Shanghai".
}\\
\small Department of Mathematics\\[-0.8ex]
\small Shanghai University\\[-0.8ex]
\small Shanghai 200444, China\\[-0.8ex]
\small \texttt{fengzhenzhao@shu.edu.cn}
}
\date{
}
\begin{document}
\maketitle

\baselineskip 16pt

\begin{abstract}  In 2013, Komatsu introduced the poly-Cauchy numbers, which generalize Cauchy numbers. Several generalizations of poly-Cauchy numbers have been considered since then. One particular type of generalizations is that of multiparameter-poly-Cauchy numbers.  In this paper, we study the log-convexity of the multiparameter-poly-Cauchy numbers of the first kind and of the second kind.
In addition, we also discuss the log-behavior of multiparameter-poly-Cauchy numbers.\\
\noindent
{\bf Key words:} \ Cauchy numbers, Poly-Cauchy numbers, multiparameter-poly-Cauchy numbers, log-convexity, log-concavity.\\
\noindent
{\bf 2000 Mathematics Subject Classification:} 05A19, 05A20, 11B83.
\end{abstract}

\section{Introduction}

Komatsu \cite{ref10}, introduced two kinds of poly-Cauchy numbers $c_n^{(k)}$ and $\widehat{c}_n^{(k)}$. The first kind $c_n^{(k)}$ is given by
\begin{eqnarray*}
c_n^{(k)}&=&\underbrace{\int_0^1\cdots \int_0^1}_k (x_1x_2\cdots x_k)_n dx_1dx_2\cdots dx_k,
\end{eqnarray*}
and the second kind $\widehat{c}_n^{(k)}$ is given by
\begin{eqnarray*}
\widehat{c}_n^{(k)}&=&(-1)^n\underbrace{\int_0^1\cdots \int_0^1}_k \langle x_1x_2\cdots x_k\rangle_n dx_1dx_2\cdots dx_k,
\end{eqnarray*}
where $k$ is a positive integer, and
$(x)_n=x(x-1)\cdots(x-n+1$ ($n\geq 1$) with $(x)_0=1$ and
$\langle x \rangle_n=x(x+1)\cdots(x+n-1)$ with $\langle x \rangle_0=1$.
The first few values of $c_n^{(k)}$ and $\widehat{c}_n^{(k)}$ are
\begin{eqnarray*}
&&c_0^{(k)}=1, c_1^{(k)}=\frac{1}{2^k},  c_2^{(k)}=-\frac{1}{2^k}+\frac{1}{3^k}, c_3^{(k)}=\frac{2}{2^k}-\frac{3}{3^k}+\frac{1}{4^k}, c_4^{(k)}=-\frac{6}{2^k}+\frac{11}{3^k}-\frac{6}{4^k}+\frac{1}{5^k}, \cdots \\
&&\widehat{c}_0^{(k)}=1, \widehat{c}_1^{(k)}=-\frac{1}{2^k}, \widehat{c}_2^{(k)}=\frac{1}{2^k}+\frac{1}{3^k}, \widehat{c}_3^{(k)}=-\frac{2}{2^k}-\frac{3}{3^k}-\frac{1}{4^k}, \widehat{c}_4^{(k)}=\frac{6}{2^k}+\frac{11}{3^k}+\frac{6}{4^k}+\frac{1}{5^k}, \cdots
\end{eqnarray*}
Some sequences derived from denominators and numerators of poly-Cauchy numbers of the first kind and of the second kind can be found in \cite[A224094--A224101,A219247,A224102--A224107,A224109]{oeis}.
When $k=1$, $c_n=c_n^{(1)}$ and $\widehat c_n=\widehat{c}_n^{(1)}$ are the Cauchy numbers of the first kind and the second kind, respectively (see \cite{ref3}).  The basic properties of the two kinds of the poly-Cauchy numbers are studied in \cite{ref9, ref10}. Several generalizations of poly-Cauchy numbers have been considered since then.  One particular type of generalizations is that of the multiparameter-poly-Cauchy numbers (\cite{ks}).
For a $k$-tuple of real numbers $L=(l_1,\dots,l_k)$ and a $n$-tuple of real numbers $A=(\alpha_0,\alpha_1,\dots,\alpha_{n-1})$,
define the $q$-multiparameter-poly-Cauchy polynomials of the first kind $c_{n,L,A,q}^{(k)}(z)$ by
$$
c_{n,L,A,q}^{(k)}(z)=\int_0^{l_1}\cdots\int_0^{l_k}(x_1\cdots x_k-\alpha_0-z)\cdots(x_1\cdots x_k-\alpha_{n-1}-z)d_q x_1\cdots d_q x_k\,.
$$
Here, Jackson's $q$-integral is defined by
$$
\int_0^x f(t)d_q t=(1-q)x\sum_{n=0}^\infty f(q^n x)q^n\,,
$$
and
$$
[x]_q=\frac{1-q^x}{1-q}\to x\quad(q\to 1)\,.
$$
The $q$-multiparameter-poly-Cauchy polynomials of the first kind can be expressed explicitly in terms of the multiparameter Stirling numbers of the first kind $S_1(n,m,A)$, defined by
$$
(t-\alpha_0)(t-\alpha_1)\cdots(t-\alpha_{n-1})=\sum_{m=0}^n S_1(n,m,A)t^m\,.
$$
Namely, we have
$$
c_{n,L,A,q}^{(k)}(z)=\sum_{m=0}^n S_1(n,m,A)\sum_{i=0}^m\binom{m}{i}\frac{(-z)^i(l_1\cdots l_k)^{m-i+1}}{[m-i+1]_q^k}
$$
(\cite[Theorem 1]{ks}).

When $q\to 1$, $l_1=\cdots=l_k=1$ and $z=0$, we have
$$
c_{n,A}^{(k)}=\lim_{q\to 1}c_{n,(1,\dots,1),A,q}^{(k)}(0)
=\sum_{m=0}^n\frac{S_1(n,m,A)}{(m+1)^k}\,.
$$
Furthermore, if  $A=(0,1,\dots,n-1)$, then $S_1(n,m)=(-1)^{n-m}S_1(n,m,A)$ are the unsigned Stirling numbers of the first kind and $c_n^{(k)}=c_{n,(0,1,\dots,n-1)}^{(k)}$ are poly-Cauchy numbers of the first kind.

Similarly, define the $q$-multiparameter-poly-Cauchy polynomials of the second kind $\widehat c_{n,L,A,q}^{(k)}(z)$ by
$$
\widehat c_{n,L,A,q}^{(k)}(z)=\int_0^{l_1}\cdots\int_0^{l_k}(-x_1\cdots x_k-\alpha_0+z)\cdots(-x_1\cdots x_k-\alpha_{n-1}+z)d_q x_1\cdots d_q x_k\,.
$$
When $q\to 1$, $l_1=\cdots=l_k=1$,  $\alpha_i=i\rho$ ($i=0,1,\dots,n-1$) and $z=0$, the number $\widehat c_{n,A}^{(k)}$ is reduced to the poly-Cauchy numbers of the second kind with a parameter $\rho$ (\cite{ref10a}).  Furthermore, if $\rho=1$, then the number $\widehat c_{n,A}^{(k)}$ is reduced to the poly-Cauchy numbers of the second kind $\widehat c_n^{(k)}$ (\cite{ref10}).
If $k=1$, then $\widehat c_n^{(1)}=\widehat c_n$ is the classical Cauchy number (\cite{ref3}).
The $q$-multiparameter-poly-Cauchy polynomials of the second kind can be also expressed explicitly in terms of the multiparameter Stirling numbers of the first kind as follows.
$$
\widehat c_{n,L,A,q}^{(k)}(z)=\sum_{m=0}^n(-1)^m S_1(n,m,A)\sum_{i=0}^m\binom{m}{i}\frac{(-z)^i(l_1\cdots l_k)^{m-i+1}}{[m-i+1]_q^k}
$$
(\cite[Theorem 2]{ks}).
When $q\to 1$, $l_1=\cdots=l_k=1$ and $z=0$, we have
$$
\widehat c_{n,A}^{(k)}=\lim_{q\to 1}\widehat c_{n,(1,\dots,1),A,q}^{(k)}(0)
=\sum_{m=0}^n\frac{(-1)^m S_1(n,m,A)}{(m+1)^k}\,.
$$

In \cite{ref15}, the log-convexity of Cauchy numbers of the first kind and of the second kind has been studied.
We recall some other definitions and notations used in this paper.

Let $\{z_n\}_{n\geq 0}$ be a
sequence of positive numbers. If for all $j\geq 1$, $z_j^2\leq
z_{j-1}z_{j+1}$ (respectively $z^2_j\ge z_{j-1}z_{j+1}$), the
sequence $\{z_n\}_{n\geq 0}$ is called log-convex (respectively
log-concave).

The log-behavior (log-convexity and log-concavity) are important properties of combinatorial sequences, and they play an important role in many subjects such as quantum physics, white noise theory, probability, economics and mathematical biology. See for instance \cite{ref1, ref2, ref4, ref5, ref6, ref8, ref12, ref13, ref14}.

If $z_0\leq z_1\leq\cdots \leq z_{m-1}\leq z_m\geq z_{m+1}\geq\cdots$ for some $m$, then $\{z_n\}_{n\geq 0}$ is called unimodal, and $m$ is called a mode of the sequence.

In this paper, we study the log-convexity of multiparameter-poly-Cauchy numbers of the first kind and of the second kind.
In addition, we also discuss the log-behavior of multiparameter-poly-Cauchy numbers.

\section{The log-convexity of poly-Cauchy numbers}

In this section, we mainly discuss the log-behavior of $\{c_n^{(k)}\}_{n\ge 2}$, $\{\widehat c_n^{(k)}\}_{n\ge 0}$, $\{c_{n,A}^{(k)}\}_{n\ge 2}$, and $\{\widehat c_{n,A}^{(k)}\}_{n\ge 0}$.
For convenience, put
$$
\sigma_n^{(k)}=(-1)^{n-1}c_n^{(k)}, \quad \sigma_{n,A}^{(k)}=(-1)^{n-1}c_{n,A}^{(k)}\quad(n\ge 1),
$$
and
$$
\omega_n^{(k)}=(-1)^n\widehat{c}_n^{(k)}, \quad \omega_{n,A}^{(k)}=(-1)^n\widehat{c}_{n,A}^{(k)}\quad(n\ge 0).
$$

In \cite{ref15}, the log-convexity of Cauchy numbers was discussed. First, we shall investigate the log-convexity of the poly-Cauchy numbers of the two kinds.

\begin{lema}\cite{ref11}
If $\{y_n\}_{n\ge 0}$ is log-convex, then the Stirling transformation of the first kind $z_n=\sum_{m=0}^n[{n\atop m}]y_m$ preserves the log-convexity.
\end{lema}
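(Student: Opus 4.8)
The plan is to reduce the assertion to a single ``cross'' inequality by means of the defining recurrence of the unsigned Stirling numbers of the first kind, and then to prove that inequality by a simultaneous induction over shifts of the input sequence. First I would record the recurrence $\left[{n+1\atop m}\right]=\left[{n\atop m-1}\right]+n\left[{n\atop m}\right]$, substitute it into $z_{n+1}=\sum_m\left[{n+1\atop m}\right]y_m$, and re-index the first sum. This yields
\[
z_{n+1}=z_n'+n\,z_n,\qquad z_n':=\sum_{m}\left[{n\atop m}\right]y_{m+1},
\]
so that $z_n'$ is itself the Stirling transform of the once-shifted sequence $\{y_{m+1}\}_{m\ge0}$. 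Since $y_{m+1}^2\le y_m y_{m+2}$, this shifted sequence is again log-convex, and hence $z_n'$ is a transform of the same type applied to a sequence of the same kind.

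Next I would combine $z_{n+1}=z_n'+n z_n$ with its shifted form $z_n=z_{n-1}'+(n-1)z_{n-1}$ to obtain the identity
\[
z_{n-1}z_{n+1}-z_n^2=\bigl(z_{n-1}z_n'-z_n z_{n-1}'\bigr)+z_{n-1}z_n .
\]
For a positive log-convex input every $z_n$ is positive (since $\left[{n\atop n}\right]=1$), so $z_{n-1}z_n\ge0$, and the log-convexity of $\{z_n\}$ would follow at once from the cross inequality
\[
z_{n-1}z_n'\ge z_n z_{n-1}'\qquad(n\ge1),
\]
that is, from the monotonicity of the ratio $z_n'/z_n$ in $n$.

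To establish this cross inequality I would run a simultaneous induction on $n$ over the entire family of shifted transforms $z_n^{(r)}:=\sum_m\left[{n\atop m}\right]y_{m+r}$, where $z_n^{(0)}=z_n$ and $z_n^{(1)}=z_n'$; each of these satisfies the analogous recurrence $z_{n+1}^{(r)}=z_n^{(r+1)}+n\,z_n^{(r)}$. The base cases use only the log-convexity of $\{y_m\}$ and of its shifts, while the inductive step rewrites the level-$n$ cross minors $z_{n-1}^{(r)}z_n^{(r+1)}-z_n^{(r)}z_{n-1}^{(r+1)}$ through these recurrences in terms of the level-$(n-1)$ data. I expect the main obstacle to be precisely the sign bookkeeping in this step: one must verify that the recurrences recombine the two inductive hypotheses at shifts $r$ and $r+1$, together with the log-convexity already proved at the previous level, with the right coefficients so that nonnegativity is preserved and the induction closes. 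The reduction identity and the base cases are otherwise routine.

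As an alternative route I would invoke the general criterion that a nonnegative lower-triangular array $T(n,k)$ obeying a recurrence $T(n,k)=(an+bk+c)T(n-1,k)+(a'n+b'k+c')T(n-1,k-1)$ with appropriate sign conditions sends log-convex sequences to log-convex sequences; the array $\left[{n\atop k}\right]$ fits this scheme with the coefficient $n-1$ multiplying $T(n-1,k)$ and $1$ multiplying $T(n-1,k-1)$, which bypasses the explicit induction above.
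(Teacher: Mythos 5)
The paper does not actually prove this lemma: it is imported verbatim from Liu and Wang \cite{ref11}, so there is no internal argument to compare against, and your ``alternative route'' (the general criterion for nonnegative triangular arrays satisfying a two-term recurrence) is essentially the proof in that reference. Your main, self-contained route is sound in outline: the recurrence $\left[{n+1\atop m}\right]=\left[{n\atop m-1}\right]+n\left[{n\atop m}\right]$, the decomposition $z_{n+1}=z_n'+nz_n$, and the identity $z_{n-1}z_{n+1}-z_n^2=(z_{n-1}z_n'-z_nz_{n-1}')+z_{n-1}z_n$ are all correct, and positivity of the $z_n$ follows from $\left[{n\atop n}\right]=1$. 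The step you leave open --- the cross inequality via simultaneous induction over the shifted transforms $z_n^{(r)}$ --- does close, and more easily than you anticipate: applying the recurrence to the two level-$n$ entries makes the cross minor collapse,
\[
z_{n-1}^{(r)}z_n^{(r+1)}-z_n^{(r)}z_{n-1}^{(r+1)}
= z_{n-1}^{(r)}z_{n-1}^{(r+2)}-\bigl(z_{n-1}^{(r+1)}\bigr)^2 ,
\]
so what is really needed is log-convexity of $r\mapsto z_N^{(r)}$ for each fixed $N$, and that follows in one line from Cauchy--Schwarz together with $y_{m+r+1}^2/y_{m+r}\le y_{m+r+2}$: with $a_m=\left[{N\atop m}\right]\ge 0$,
\[
\Bigl(\sum_m a_m y_{m+r+1}\Bigr)^2\le\Bigl(\sum_m a_m y_{m+r}\Bigr)\Bigl(\sum_m a_m \frac{y_{m+r+1}^2}{y_{m+r}}\Bigr)\le\Bigl(\sum_m a_m y_{m+r}\Bigr)\Bigl(\sum_m a_m y_{m+r+2}\Bigr),
\]
with no induction on $n$ needed. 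If you insist on the induction, it also closes, but only with the strengthened hypothesis of full $r$-log-convexity at level $n-1$: expanding $z_n^{(r)}z_n^{(r+2)}-\bigl(z_n^{(r+1)}\bigr)^2$ produces, besides two level-$(n-1)$ minors with nonnegative coefficients, the extra term $(n-1)\bigl(z_{n-1}^{(r)}z_{n-1}^{(r+3)}-z_{n-1}^{(r+1)}z_{n-1}^{(r+2)}\bigr)$, which is nonnegative precisely because log-convexity in $r$ gives monotone ratios; this is the sign bookkeeping you flagged. So your argument is correct once this step is written out; compared with the citation the paper relies on, it is a more elementary and self-contained proof, at the price of the shifted-transform bookkeeping, and it even yields the slightly stronger conclusion $z_{n-1}z_{n+1}-z_n^2\ge z_{n-1}z_n>0$.
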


\begin{thm}
The sequences $\{c_n^{(k)}\}_{n\ge 2}$ and $\{\widehat c_n^{(k)}\}_{n\ge 0}$ are log-convex.
\end{thm}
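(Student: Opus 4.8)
The plan is to reduce both assertions to the positive auxiliary sequences $\sigma_n^{(k)}=(-1)^{n-1}c_n^{(k)}$ and $\omega_n^{(k)}=(-1)^n\widehat c_n^{(k)}$ introduced above. First I would pin down the sign pattern: for $t=x_1\cdots x_k\in(0,1)$ the factorization $(t)_n=t(t-1)\cdots(t-n+1)$ has exactly $n-1$ negative factors, so $(-1)^{n-1}(t)_n>0$, and likewise $\langle t\rangle_n>0$. Integrating over $[0,1]^k$ then shows $c_n^{(k)}$ has sign $(-1)^{n-1}$ (for $n\ge1$) and $\widehat c_n^{(k)}$ has sign $(-1)^n$, so $\sigma_n^{(k)},\omega_n^{(k)}>0$. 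Because the sign prefactors square away, log-convexity of $\{c_n^{(k)}\}_{n\ge2}$ is equivalent to that of $\{\sigma_n^{(k)}\}_{n\ge2}$ and log-convexity of $\{\widehat c_n^{(k)}\}_{n\ge0}$ to that of $\{\omega_n^{(k)}\}_{n\ge0}$: indeed $c_j^2=\sigma_j^2$ and $c_{j-1}c_{j+1}=\sigma_{j-1}\sigma_{j+1}$, and similarly for $\omega$. It therefore suffices to establish $\sigma_j^2\le\sigma_{j-1}\sigma_{j+1}$ and $\omega_j^2\le\omega_{j-1}\omega_{j+1}$.

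For the second kind I would invoke Lemma 2.1 directly. Starting from the explicit formula $\widehat c_n^{(k)}=\sum_{m=0}^n(-1)^mS_1(n,m,A)/(m+1)^k$ with $A=(0,1,\dots,n-1)$ and $S_1(n,m,A)=(-1)^{n-m}[{n\atop m}]$, the signs combine to give
$$\omega_n^{(k)}=\sum_{m=0}^n[{n\atop m}]\frac{1}{(m+1)^k},$$
which is exactly the Stirling transform of the first kind of the sequence $y_m=1/(m+1)^k$. Since the inequality $y_m^2\le y_{m-1}y_{m+1}$ reduces to $m(m+2)\le(m+1)^2$, which always holds, $\{y_m\}$ is log-convex, and Lemma 2.1 immediately delivers the log-convexity of $\{\omega_n^{(k)}\}_{n\ge0}$.

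The first kind is where the real work lies, because the same computation yields $\sigma_n^{(k)}=\sum_{m=0}^n[{n\atop m}](-1)^{m+1}/(m+1)^k$, whose coefficient sequence alternates in sign, so Lemma 2.1 does not apply. Here I would return to the integral representation, writing $\sigma_n^{(k)}=\int_{[0,1]^k}f_n(x_1\cdots x_k)\,dx_1\cdots dx_k$ with $f_n(t)=(-1)^{n-1}(t)_n=t\prod_{j=1}^{n-1}(j-t)>0$ on $(0,1)$. The key is the pointwise log-convexity of $f_n$ in $n$: since $f_{n+1}(t)/f_n(t)=n-t$ exceeds $f_n(t)/f_{n-1}(t)=(n-1)-t$ by exactly $1$, we have $f_n(t)^2\le f_{n-1}(t)f_{n+1}(t)$ for every $t\in(0,1)$ and every $n\ge2$. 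Taking square roots, integrating, and then applying the Cauchy--Schwarz inequality to $\int\sqrt{f_{n-1}}\,\sqrt{f_{n+1}}$ gives
$$\bigl(\sigma_n^{(k)}\bigr)^2=\Bigl(\int f_n\Bigr)^2\le\Bigl(\int\sqrt{f_{n-1}f_{n+1}}\Bigr)^2\le\int f_{n-1}\int f_{n+1}=\sigma_{n-1}^{(k)}\sigma_{n+1}^{(k)}.$$
The same integral argument, with $\langle t\rangle_n$ in place of $f_n$, in fact handles the second kind uniformly as well, should one prefer to avoid Lemma 2.1.

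I expect the main obstacle to be precisely this sign asymmetry between the two kinds: the second kind is a genuine positive Stirling transform and so yields to Lemma 2.1, whereas the first kind must be routed through the integral together with the pointwise-to-integral passage via Cauchy--Schwarz. The restriction to $n\ge2$ for the first kind is forced by the low-order sign change $c_1^{(k)}>0>c_2^{(k)}$, which would otherwise spoil log-convexity at the start of the sequence; one should also verify that every factor $j-t$ is positive on $(0,1)$ so that all $f_{n-1},f_n,f_{n+1}$ are strictly positive and the Cauchy--Schwarz step is legitimate.
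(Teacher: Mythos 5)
Your proof is correct, and it is half the paper's argument and half a different one. For the second kind you do exactly what the paper does: rewrite $\omega_n^{(k)}=(-1)^n\widehat c_n^{(k)}=\sum_{m=0}^n\big[{n\atop m}\big]\frac{1}{(m+1)^k}$, check that $\{1/(m+1)^k\}_{m\ge0}$ is log-convex, and invoke Lemma 2.1. For the first kind, however, the paper stays entirely inside the integral representation and argues algebraically: writing $t=x_1\cdots x_k$, it bounds the last factor via $n-t\ge n-1$, so that $\sigma_{n+1}^{(k)}\ge(n-1)\sigma_n^{(k)}$ in effect, and then uses the exact identity $\int t(1-t)\cdots(n-1-t)=(n-1)\int t(1-t)\cdots(n-2-t)-\int t^2(1-t)\cdots(n-2-t)$ to exhibit $\big(\sigma_n^{(k)}\big)^2-\sigma_{n-1}^{(k)}\sigma_{n+1}^{(k)}$ as bounded above by minus a product of two positive integrals. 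Your route instead observes that the integrands $f_n(t)=t\prod_{j=1}^{n-1}(j-t)$ are pointwise log-convex in $n$ (the ratio $f_{n+1}/f_n=n-t$ increases with $n$) and passes from the pointwise inequality to the integrated one by Cauchy--Schwarz. Both are valid; the paper's manipulation is more elementary (no Cauchy--Schwarz) and is the same template it reuses verbatim for Theorem 2.3, while your argument is more structural and uniform -- as you note, it treats both kinds by one mechanism, and since it only needs the factor ratios to be pointwise nondecreasing and positive, it would in fact yield the multiparameter Theorem 2.3 under the weaker hypothesis that the $\alpha_j$ are nondecreasing with $\alpha_j\ge1$, rather than $\alpha_j-\alpha_{j-1}\ge1$. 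One cosmetic remark: the restriction to $n\ge2$ is not because a sign change ``spoils'' log-convexity of the absolute values (the inequality $\sigma_2^2\le\sigma_1\sigma_3$ actually holds); it is simply that the signed sequence $\{c_n^{(k)}\}$ alternates from $n=1$ on, so the statement is to be read, as you do read it, through $c_j^2=\sigma_j^2$ and $c_{j-1}c_{j+1}=\sigma_{j-1}\sigma_{j+1}$.
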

\begin{proof}
We first prove the log-convexity of $\{\sigma_n^{(k)}\}_{n\ge 2}$. For $n\ge 1$,
\begin{eqnarray*}
&&\quad\bigg(c_n^{(k)}\bigg)^2-c_{n-1}^{(k)}c_{n+1}^{(k)}\\
&&=\bigg[\underbrace{\int_0^1\cdots \int_0^1}_k x_1\cdots x_k(1-x_1\cdots x_k)\cdots(n-1-x_1\cdots x_k)dx_1\cdots dx_k\bigg]^2\\
&&\quad-\underbrace{\int_0^1\cdots \int_0^1}_k x_1\cdots x_k(1-x_1\cdots x_k)\cdots(n-2-x_1\cdots x_k)dx_1d\cdots dx_k\\
&&\quad\times\underbrace{\int_0^1\cdots \int_0^1}_k x_1\cdots x_k(1-x_1\cdots x_k)\cdots(n-x_1\cdots x_k)dx_1\cdots dx_k.
\end{eqnarray*}
For $0\le x_j\le 1$ ($1\le j\le k$), $n-1\le n-x_1x_2\cdots x_k\le n$. Then for $n\ge3$,
\begin{eqnarray*}
&&\quad\bigg(c_n^{(k)}\bigg)^2-c_{n-1}^{(k)}c_{n+1}^{(k)}\\
&&\leq\bigg[\underbrace{\int_0^1\cdots \int_0^1}_k x_1\cdots x_k(1-x_1\cdots x_k)\cdots(n-1-x_1\cdots x_k)dx_1\cdots dx_k\bigg]^2\\
&&\quad-\underbrace{\int_0^1\cdots \int_0^1}_k x_1\cdots x_k(1-x_1\cdots x_k)\cdots(n-2-x_1\cdots x_k)dx_1d\cdots dx_k\\
&&\quad\times(n-1)\underbrace{\int_0^1\cdots \int_0^1}_k x_1\cdots x_k(1-x_1\cdots x_k)\cdots(n-1-x_1\cdots x_k)dx_1\cdots dx_k \\
&&=-\underbrace{\int_0^1\cdots \int_0^1}_k x_1\cdots x_k(1-x_1\cdots x_k)\cdots(n-1-x_1\cdots x_k)dx_1\cdots dx_k \\
&&\quad\times\underbrace{\int_0^1\cdots \int_0^1}_k (x_1\cdots x_k)^2(1-x_1\cdots x_k)\cdots(n-2-x_1\cdots x_k)dx_1\cdots dx_k\\
&&\leq0.
\end{eqnarray*}
Hence, sequence $\{c_n^{(k)}\}_{n\ge 2}$ is log-convex.

Recall the definition
\begin{eqnarray}
\omega_n^{(k)}=\sum_{m=0}^n\bigg[{n\atop m}\bigg]\frac{1}{(m+1)^k}. \label{zf-2.1}
\end{eqnarray}
It is easy to see that the sequence $\bigg\{\frac{1}{(m+1)^k}\bigg\}_{m\ge 0}$ is log-convex. By means of Lemma 2.1, we get that the sequence $\{\omega_n^{(k)}\}_{n\ge 0}$ is log-convex.
\end{proof}

\begin{thm}
For the sequence $\{\omega_n^{(k)}\}_{n\ge 3}$, we have
\begin{eqnarray}
\omega_n^{(k)}<\bigg[{n\atop K_n}\bigg]\sum_{m=1}^n\frac{1}{(m+1)^k}, \label{zf-2.2}
\end{eqnarray}
where $K_n$ is the index of the maximal unsigned Stirling numbers of the first kind $[{n\atop m}]$ for all fixed $n\ge 3$.
\end{thm}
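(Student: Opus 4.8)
The plan is to read the bound directly off the explicit representation (\ref{zf-2.1}). Since the unsigned Stirling number of the first kind satisfies $\left[{n\atop 0}\right]=0$ for every $n\ge 1$, the $m=0$ term in (\ref{zf-2.1}) vanishes and we obtain $\omega_n^{(k)}=\sum_{m=1}^n\left[{n\atop m}\right]\frac{1}{(m+1)^k}$. This rewriting already matches the summation range $m=1,\dots,n$ appearing on the right-hand side of (\ref{zf-2.2}), so no further reindexing is needed.

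Next I would compare each coefficient with the maximal one. By the definition of $K_n$ as the index of the largest $\left[{n\atop m}\right]$ for the fixed value of $n$, we have $\left[{n\atop m}\right]\le\left[{n\atop K_n}\right]$ for all $1\le m\le n$. Multiplying each of these inequalities by the positive weight $(m+1)^{-k}$ and summing over $m$ yields $\omega_n^{(k)}\le\left[{n\atop K_n}\right]\sum_{m=1}^n\frac{1}{(m+1)^k}$, which is the target estimate with $\le$ in place of $<$.

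The remaining, and only nontrivial, point is to upgrade this to a strict inequality for $n\ge 3$. I would write the gap as
$$\left[{n\atop K_n}\right]\sum_{m=1}^n\frac{1}{(m+1)^k}-\omega_n^{(k)}=\sum_{m=1}^n\left(\left[{n\atop K_n}\right]-\left[{n\atop m}\right]\right)\frac{1}{(m+1)^k},$$
where every summand is nonnegative, so it suffices to exhibit a single index $m$ for which $\left[{n\atop m}\right]<\left[{n\atop K_n}\right]$. For $n\ge 3$ the numbers $\left[{n\atop 1}\right],\dots,\left[{n\atop n}\right]$ are not all equal: indeed $\left[{n\atop n}\right]=1$ while $\left[{n\atop 1}\right]=(n-1)!\ge 2$, so the maximum strictly exceeds $1$ and the term $m=n$ contributes a positive amount. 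Hence the gap is strictly positive and (\ref{zf-2.2}) follows. This also pins down the hypothesis $n\ge 3$, since for $n=1,2$ all of the relevant Stirling numbers equal $1$ and (\ref{zf-2.2}) degenerates to an equality.

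I do not anticipate a genuine obstacle here: once the vanishing of the $m=0$ term and the definition of $K_n$ are in place, the estimate is a one-line weighted comparison, and the only care required is the short bookkeeping that certifies strictness for $n\ge 3$.
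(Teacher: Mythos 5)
Your proposal is correct and follows essentially the same route as the paper: drop the $m=0$ term since $\left[{n\atop 0}\right]=0$, compare each remaining term with $\left[{n\atop K_n}\right]\frac{1}{(m+1)^k}$, and sum. The only (minor) difference is in certifying strictness: the paper invokes the strict unimodality of $\left[{n\atop m}\right]$ in $m$ (so every term with $m\neq K_n$ is strictly smaller), whereas you only exhibit the single index $m=n$ via $\left[{n\atop n}\right]=1<(n-1)!=\left[{n\atop 1}\right]$ for $n\ge 3$, which is a slightly more elementary way to get the same conclusion.
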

\begin{proof}
For the Stirling numbers of the first kind, we know that
\begin{eqnarray*}
\bigg[{n\atop 1}\bigg]<\bigg[{n\atop 2}\bigg]<\cdots<\bigg[{n\atop K_n-1}\bigg]<\bigg[{n\atop K_n}\bigg]>\bigg[{n\atop K_n+1}\bigg]>\cdots>\bigg[{n\atop n}\bigg]\,.
\end{eqnarray*}
where $K_n\sim\frac{n}{\ln n} (n\to\infty)$ (see \cite{ref7}). We note that
\begin{eqnarray*}
\bigg[{n\atop 0}\bigg]&=&0, \quad n\ge 1, \\
\bigg[{n\atop m}\bigg]\frac{1}{(m+1)^k}&<&\bigg[{n\atop K_n}\bigg]\frac{1}{(m+1)^k}, \quad 1\le m\le n \quad (m\neq K_n),
\end{eqnarray*}
By applying (\ref{zf-2.1}), we obtain (\ref{zf-2.2}).
\end{proof}

We now consider the log-convexity and unimodality of multiparameter-poly-Cauchy numbers of two kinds under some conditions.

\begin{thm}
Assume that the sequence $A=(0, \alpha_1, \ldots, \alpha_n, \ldots)$ satisfies that $\alpha_j\ge 1$, and $\alpha_j-\alpha_{j-1}\ge 1$ for $j\ge 1$. The sequences $\{c_{n,A}^{(k)}\}_{n\ge 2}$ and $\{\widehat c_{n,A}^{(k)}\}_{n\ge 0}$ are log-convex.
\end{thm}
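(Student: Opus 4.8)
The plan is to mirror the integral argument of Theorem 2.1, now carried out with the general parameter sequence $A$. First I would record the integral representations obtained by letting $q\to 1$, $l_1=\cdots=l_k=1$ and $z=0$ in the Jackson-integral definitions, since in that limit the $q$-integral becomes the ordinary integral. Writing $u=x_1\cdots x_k$ and recalling $\alpha_0=0$, this gives
$$
\sigma_{n,A}^{(k)}=(-1)^{n-1}c_{n,A}^{(k)}=\int_0^1\cdots\int_0^1 u(\alpha_1-u)\cdots(\alpha_{n-1}-u)\,dx_1\cdots dx_k
$$
and
$$
\omega_{n,A}^{(k)}=(-1)^{n}\widehat c_{n,A}^{(k)}=\int_0^1\cdots\int_0^1 u(u+\alpha_1)\cdots(u+\alpha_{n-1})\,dx_1\cdots dx_k .
$$
Because $\alpha_j\ge 1>u$ on the open cube, every factor $(\alpha_j-u)$, $(u+\alpha_j)$ and $u$ is positive, so $\sigma_{n,A}^{(k)}>0$ and $\omega_{n,A}^{(k)}>0$. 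Moreover the sign normalizations cancel in the cross products, so that $\big(\sigma_{n,A}^{(k)}\big)^2\le\sigma_{n-1,A}^{(k)}\sigma_{n+1,A}^{(k)}$ is equivalent to $\big(c_{n,A}^{(k)}\big)^2\le c_{n-1,A}^{(k)}c_{n+1,A}^{(k)}$, and similarly for $\omega$ and $\widehat c$; hence it suffices to prove log-convexity of the positive sequences $\{\sigma_{n,A}^{(k)}\}$ and $\{\omega_{n,A}^{(k)}\}$.

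For the first kind I would bound the extra factor appearing in $\sigma_{n+1,A}^{(k)}$. Since $0<u<1$ gives $\alpha_n-u\ge\alpha_n-1$, we get $\sigma_{n+1,A}^{(k)}\ge(\alpha_n-1)\sigma_{n,A}^{(k)}$, whence
$$
\big(\sigma_{n,A}^{(k)}\big)^2-\sigma_{n-1,A}^{(k)}\sigma_{n+1,A}^{(k)}\le\sigma_{n,A}^{(k)}\Big[\sigma_{n,A}^{(k)}-(\alpha_n-1)\sigma_{n-1,A}^{(k)}\Big].
$$
Writing $Q(u)=u(\alpha_1-u)\cdots(\alpha_{n-2}-u)\ge 0$ for the integrand of $\sigma_{n-1,A}^{(k)}$, the bracket equals $\int_0^1\cdots\int_0^1 Q(u)\big[(\alpha_{n-1}-u)-(\alpha_n-1)\big]\,dx_1\cdots dx_k$. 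Here the hypothesis $\alpha_n-\alpha_{n-1}\ge 1$ enters: it forces $(\alpha_{n-1}-u)-(\alpha_n-1)=(\alpha_{n-1}-\alpha_n+1)-u\le -u\le 0$, so the bracket is $\le 0$ and the whole expression is $\le 0$ for $n\ge 3$. This yields log-convexity of $\{c_{n,A}^{(k)}\}_{n\ge 2}$.

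The second kind is handled identically. Now $u+\alpha_n\ge\alpha_n$ on $(0,1)$ gives $\omega_{n+1,A}^{(k)}\ge\alpha_n\,\omega_{n,A}^{(k)}$, so
$$
\big(\omega_{n,A}^{(k)}\big)^2-\omega_{n-1,A}^{(k)}\omega_{n+1,A}^{(k)}\le\omega_{n,A}^{(k)}\Big[\omega_{n,A}^{(k)}-\alpha_n\,\omega_{n-1,A}^{(k)}\Big],
$$
and with $R(u)=u(u+\alpha_1)\cdots(u+\alpha_{n-2})\ge 0$ the bracket is $\int_0^1\cdots\int_0^1 R(u)\big[(u+\alpha_{n-1})-\alpha_n\big]\,dx_1\cdots dx_k$. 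Again $\alpha_n-\alpha_{n-1}\ge 1$ makes $(u+\alpha_{n-1})-\alpha_n=u-(\alpha_n-\alpha_{n-1})\le u-1\le 0$, so the bracket is $\le 0$ for all $n\ge 1$, giving log-convexity of $\{\widehat c_{n,A}^{(k)}\}_{n\ge 0}$.

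The telescoping algebra is routine; the only point needing care is the choice of the bounding constant for the final factor so that both hypotheses are used exactly where needed. The condition $\alpha_j\ge 1$ keeps every integrand positive, which both legitimizes the sign normalizations $\sigma,\omega$ and fixes the sign of the product $\sigma_{n,A}^{(k)}\cdot[\cdots]$; the gap condition $\alpha_n-\alpha_{n-1}\ge 1$ is precisely what makes the telescoped integrand nonpositive. I expect the main (mild) obstacle to be verifying that replacing the extreme factor by the constant $\alpha_n-1$ (resp.\ $\alpha_n$) and then collapsing the difference against $\sigma_{n-1,A}^{(k)}$ (resp.\ $\omega_{n-1,A}^{(k)}$) produces a manifestly signed integrand, i.e.\ confirming that $\alpha_n-1\ge\alpha_{n-1}$ is exactly the inequality that closes each estimate.
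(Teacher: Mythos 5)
Your proposal is correct, and it uses the same raw material as the paper (the integral representations $\sigma_{n,A}^{(k)}=\int_0^1\cdots\int_0^1 u(\alpha_1-u)\cdots(\alpha_{n-1}-u)\,dx_1\cdots dx_k$, $\omega_{n,A}^{(k)}=\int_0^1\cdots\int_0^1 u(u+\alpha_1)\cdots(u+\alpha_{n-1})\,dx_1\cdots dx_k$ with $u=x_1\cdots x_k$, plus the two hypotheses used in exactly the same places), but the middle step is genuinely different. The paper expands the last factor exactly, writing $\alpha_n-u=\alpha_{n-1}+(\alpha_n-\alpha_{n-1}-u)$ (and analogously $u+\alpha_{n-1}=\alpha_{n-1}+u$ for the second kind), so that after cancellation the difference $\big(c_{n,A}^{(k)}\big)^2-c_{n-1,A}^{(k)}c_{n+1,A}^{(k)}$ is exhibited as minus a sum of two manifestly nonnegative products of integrals; the gap condition $\alpha_n-\alpha_{n-1}\ge 1$ only enters to make one of those integrands nonnegative. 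You instead interpose a constant between consecutive ratios: $\sigma_{n,A}^{(k)}\le(\alpha_n-1)\,\sigma_{n-1,A}^{(k)}$ and $\sigma_{n+1,A}^{(k)}\ge(\alpha_n-1)\,\sigma_{n,A}^{(k)}$ (resp.\ with the constant $\alpha_n$ for $\omega$), which gives $\big(\sigma_{n,A}^{(k)}\big)^2\le\sigma_{n-1,A}^{(k)}\sigma_{n+1,A}^{(k)}$ at once. Your route is slightly cleaner and actually proves a bit more (an explicit constant separating the ratios $\sigma_{n,A}^{(k)}/\sigma_{n-1,A}^{(k)}$ and $\sigma_{n+1,A}^{(k)}/\sigma_{n,A}^{(k)}$), whereas the paper's identity gives an exact signed decomposition of the log-convexity defect. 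Two trivial points of care: your formula for $R(u)$ should be read as the integrand of $\omega_{n-1,A}^{(k)}$ (so $R\equiv 1$ when $n=1$), and the sign-cancellation remark $\big(\sigma_{n,A}^{(k)}\big)^2-\sigma_{n-1,A}^{(k)}\sigma_{n+1,A}^{(k)}=\big(c_{n,A}^{(k)}\big)^2-c_{n-1,A}^{(k)}c_{n+1,A}^{(k)}$ is exactly right and is also what the paper uses implicitly.
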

\begin{proof}
Since $\alpha_j\ge1$ and $\alpha_j-\alpha_{j-1}\ge1$ ($j\ge1$),
for $n\ge 3$ we have
\begin{eqnarray*}
&&\bigg(c_{n,A}^{(k)}\bigg)^2-c_{n-1,A}^{(k)}c_{n+1,A}^{(k)}\\
&&=\bigg[\underbrace{\int_0^1\cdots \int_0^1}_k x_1\cdots x_k(\alpha_1-x_1\cdots x_k)\cdots(\alpha_{n-1}-x_1\cdots x_k)dx_1\cdots dx_k\bigg]^2\\
&&\quad-\underbrace{\int_0^1\cdots \int_0^1}_k x_1\cdots x_k(\alpha_1-x_1\cdots x_k)\cdots(\alpha_{n-2}-x_1\cdots x_k)dx_1\cdots dx_k\\
&&\quad\times\underbrace{\int_0^1\cdots \int_0^1}_k x_1\cdots x_k(\alpha_1-x_1\cdots x_k)\cdots(\alpha_n-x_1\cdots x_k)dx_1\cdots dx_k,\\
&&=-\underbrace{\int_0^1\cdots \int_0^1}_k (x_1\cdots x_k)^2(\alpha_1-x_1\cdots x_k)\cdots(\alpha_{n-2}-x_1\cdots x_k)dx_1\cdots dx_k\\
&&\quad\times\underbrace{\int_0^1\cdots \int_0^1}_k x_1\cdots x_k(\alpha_1-x_1\cdots x_k)\cdots(\alpha_{n-1}-x_1\cdots x_k)dx_1\cdots dx_k\\
&&\quad-\underbrace{\int_0^1\cdots \int_0^1}_k x_1\cdots x_k(\alpha_1-x_1\cdots x_k)\cdots(\alpha_{n-2}-x_1\cdots x_k)dx_1\cdots dx_k\\
&&\quad\times\underbrace{\int_0^1\cdots \int_0^1}_k x_1\cdots x_k(\alpha_1-x_1\cdots x_k)\cdots(\alpha_{n-1}-x_1\cdots x_k)\\
&&\qquad\qquad\times(\alpha_n-\alpha_{n-1}-x_1\cdots x_k)dx_1\cdots dx_k\\
&&\le-\underbrace{\int_0^1\cdots \int_0^1}_k (x_1\cdots x_k)^2(\alpha_1-x_1\cdots x_k)\cdots(\alpha_{n-2}-x_1\cdots x_k)dx_1\cdots dx_k\\
&&\quad\times\underbrace{\int_0^1\cdots \int_0^1}_k x_1\cdots x_k(\alpha_1-x_1\cdots x_k)\cdots(\alpha_{n-1}-x_1\cdots x_k)dx_1\cdots dx_k\\
&&\le0,
\end{eqnarray*}

Similarly, for $n\ge 1$ we have
\begin{eqnarray*}
&&\bigg(\widehat c_{n,A}^{(k)}\bigg)^2-\widehat c_{n-1,A}^{(k)}\widehat c_{n+1,A}^{(k)}\\
&&=\bigg[\underbrace{\int_0^1\cdots \int_0^1}_k x_1\cdots x_k(x_1\cdots x_k+\alpha_1)\cdots(x_1\cdots x_k+\alpha_{n-1})dx_1\cdots dx_k\bigg]^2\\
&&\quad-\underbrace{\int_0^1\cdots \int_0^1}_k x_1\cdots x_k(x_1\cdots x_k+\alpha_1)\cdots(x_1\cdots x_k+\alpha_{n-2})dx_1\cdots dx_k \\
&&\quad\times\underbrace{\int_0^1\cdots \int_0^1}_k x_1\cdots x_k(x_1\cdots x_k+\alpha_1)\cdots(x_1\cdots x_k+\alpha_n)dx_1\cdots dx_k\\
&&\le-\underbrace{\int_0^1\cdots \int_0^1}_k (x_1\cdots x_k)^2(x_1\cdots x_k+\alpha_1)\cdots(x_1\cdots x_k+\alpha_{n-1})dx_1\cdots dx_k\\
&&\quad\times\underbrace{\int_0^1\cdots \int_0^1}_k x_1\cdots x_k(x_1\cdots x_k+\alpha_1)\cdots(x_1\cdots x_k+\alpha_{n-2})dx_1\cdots dx_k,\\
&&\le 0.
\end{eqnarray*}
Hence, the sequence $\{c_{n,A}^{(k)}\}_{n\ge 2}$ and $\{\widehat c_{n,A}^{(k)}\}_{n\ge 0}$ are log-convex.
\end{proof}

Note that Theorem 2.3 generalizes Theorem 2.1. Clearly, Theorem 2.3 becomes Theorem 2.1 when $A=(0, 1, 2, \ldots, n, \ldots)$.

\begin{thm}
Suppose that the sequence $A=(0, \alpha_1, \ldots, \alpha_n, \ldots)$ satisfies that $\alpha_j\ge0$ for $j\ge 1$. Then we have:
\\
\noindent
{\rm (i)} if there exists $l\ge 3$ such that $\alpha_j\ge 2$ for $1\le j\le l$ and $\alpha_j=1$ for $j\ge l+1$, then $\{\sigma_{n,A}^{(k)}\}_{n\ge 1}$ is unimodal, and its single peak is at $l+1$; \\
\noindent
{\rm (ii)} if there exists $l\ge 3$ such that $\alpha_j\ge 1$ for $1\le j\le l$ and $\alpha_j=0$ for $j\ge l+1$, then $\{\omega_{n,A}^{(k)}\}_{n\ge 1}$ is unimodal, and its single peak is at $l+1$.
\end{thm}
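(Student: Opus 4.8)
The plan is to work directly from the integral representations of the two sequences rather than from their Stirling-number expansions, exactly as in the proofs of Theorems 2.1 and 2.3. Writing $y=x_1\cdots x_k$, so that $0\le y\le 1$ throughout the domain, and using $\alpha_0=0$, the factor $(-1)^{n-1}$ converts each $(y-\alpha_j)$ into $(\alpha_j-y)$, so that
\[
\sigma_{n,A}^{(k)}=\underbrace{\int_0^1\cdots\int_0^1}_k y(\alpha_1-y)(\alpha_2-y)\cdots(\alpha_{n-1}-y)\,dx_1\cdots dx_k,
\]
and likewise, using $\widehat c_{n,A}^{(k)}=(-1)^n\omega_{n,A}^{(k)}$,
\[
\omega_{n,A}^{(k)}=\underbrace{\int_0^1\cdots\int_0^1}_k y(y+\alpha_1)(y+\alpha_2)\cdots(y+\alpha_{n-1})\,dx_1\cdots dx_k.
\]
Under the hypotheses of (i) (where every $\alpha_j\ge 1$) and of (ii) (where every $\alpha_j\ge 0$) all factors are nonnegative on $[0,1]^k$, so both sequences consist of positive numbers.

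First I would compare consecutive terms by forming the difference and factoring out the common product, the only change from index $n$ to $n+1$ being one extra factor. This gives
\[
\sigma_{n+1,A}^{(k)}-\sigma_{n,A}^{(k)}=\underbrace{\int_0^1\cdots\int_0^1}_k y(\alpha_1-y)\cdots(\alpha_{n-1}-y)(\alpha_n-1-y)\,dx_1\cdots dx_k,
\]
and
\[
\omega_{n+1,A}^{(k)}-\omega_{n,A}^{(k)}=\underbrace{\int_0^1\cdots\int_0^1}_k y(y+\alpha_1)\cdots(y+\alpha_{n-1})(y+\alpha_n-1)\,dx_1\cdots dx_k.
\]
Since the leading products $y\prod_{j<n}(\alpha_j-y)$ and $y\prod_{j<n}(y+\alpha_j)$ are nonnegative, the sign of each difference is governed entirely by the final factor, $\alpha_n-1-y$ in case (i) and $y+\alpha_n-1$ in case (ii).

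For (i): when $n\le l$ we have $\alpha_n\ge 2$, so $\alpha_n-1-y\ge 1-y\ge 0$, strictly positive on the full-measure set where all $x_j\in(0,1)$ (there $0<y<1$); hence $\sigma_{n+1,A}^{(k)}>\sigma_{n,A}^{(k)}$, and the sequence strictly increases through index $l+1$. When $n\ge l+1$ we have $\alpha_n=1$, so $\alpha_n-1-y=-y$ and the integrand is $-y^2(\alpha_1-y)\cdots(\alpha_{n-1}-y)$, strictly negative on the same set; hence $\sigma_{n+1,A}^{(k)}<\sigma_{n,A}^{(k)}$, and the sequence strictly decreases past $l+1$. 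This yields unimodality with single peak at $l+1$. Part (ii) is identical in structure: for $n\le l$, $\alpha_n\ge 1$ forces $y+\alpha_n-1\ge y\ge 0$ (positive for $y>0$), an increase; for $n\ge l+1$, $\alpha_n=0$ forces $y+\alpha_n-1=y-1\le 0$ (negative for $y<1$), a decrease; again the peak sits at $l+1$.

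The only point requiring real care is the strictness of the inequalities, since the claim asserts a genuine single peak and not merely a weak mode. This follows because in each case the relevant integrand is continuous and of one constant strict sign on the full-measure set $\{0<x_1,\dots,x_k<1\}$, so each difference integral is strictly nonzero; the bookkeeping that every factor $(\alpha_j-y)$ or $(y+\alpha_j)$ with $j<n$ stays strictly positive there is routine once one splits the index range at $l$. The hypothesis $l\ge 3$ plays no role in the sign analysis and serves only to guarantee that the peak lies at an interior index $l+1\ge 4$.
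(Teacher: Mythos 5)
Your proposal is correct and follows essentially the same route as the paper: the paper likewise forms the differences $\sigma_{n+1,A}^{(k)}-\sigma_{n,A}^{(k)}$ and $\omega_{n+1,A}^{(k)}-\omega_{n,A}^{(k)}$ as single multiple integrals with the extra factor $(\alpha_n-1-x_1\cdots x_k)$, resp.\ $(x_1\cdots x_k+\alpha_n-1)$, and reads off the sign change at $n=l+1$. Your only addition is to spell out the sign verification (positivity of the remaining factors and strictness on the interior of the cube), which the paper leaves as ``we can verify.''
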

\begin{proof}
{\rm (i)} For $n\ge 1$,
\begin{eqnarray*}
&&\quad\sigma_{n+1,A}^{(k)}-\sigma_{n,A}^{(k)}\\
&&=\underbrace{\int_0^1\cdots \int_0^1}_k x_1\cdots x_k(\alpha_1-x_1\cdots x_k)\cdots(\alpha_{n-1}-x_1\cdots x_k)(\alpha_n-1-x_1\cdots x_k)dx_1\cdots dx_k.
\end{eqnarray*}
We can verify that $\sigma_{n+1,A}^{(k)}-\sigma_{n,A}^{(k)}\ge 0$ for $1\le n\le l$ and $\sigma_{n+1,A}^{(k)}-\sigma_{n,A}^{(k)}\le 0$ for $n\ge l+1$.
Hence, $\{\sigma_{n,A}^{(k)}\}_{n\ge 1}$ is unimodal, and its single peak is at $l+1$.

\noindent
{\rm (ii)} For $n\ge 1$,
\begin{eqnarray*}
&&\quad\omega_{n+1,A}^{(k)}-\omega_{n,A}^{(k)}\\
&&=\underbrace{\int_0^1\cdots \int_0^1}_k x_1\cdots x_k(x_1\cdots x_k+\alpha_1)\cdots(x_1\cdots x_k+\alpha_{n-1})(x_1\cdots x_k+\alpha_n-1)dx_1\cdots dx_k.
\end{eqnarray*}
We can verify that $\omega_{n+1,A}^{(k)}-\omega_{n,A}^{(k)}\ge 0$ for $1\le n\le l$ and $\omega_{n+1,A}^{(k)}-\omega_{n,A}^{(k)}\le 0$ for $n\ge l+1$. Therefore, $\{\omega_{n,A}^{(k)}\}_{n\ge 1}$ is unimodal, and its single peak is at $l+1$.
\end{proof}

\section{Acknowledgements}
The authors would like to thank an anonymous referee whose helpful suggestions
and comments have led to much improvement of the paper.

\end{document}